\newtheorem{theorem}{Theorem}[section]
\newtheorem{lemma}[theorem]{Lemma}
\theoremstyle{definition}
\newtheorem{definition}[theorem]{Definition}
\newtheorem{exemple}[theorem]{Example}
\theoremstyle{remark}
\newtheorem{remark}[theorem]{Remark}
\numberwithin{equation}{section}
\begin{document}
\title{Fixed point theorems in controlled rectangular metric spaces}
\author[Mohamed Rossafi and Abdelkarim Kari]{Mohamed Rossafi$^{1*}$ {and} Abdelkarim Kari$^{2}$}

\address{$^{1}$LaSMA Laboratory Department of Mathematics, Faculty of Sciences Dhar El Mahraz, University Sidi Mohamed Ben Abdellah, B. P. 1796 Fes Atlas, Morocco}
\email{rossafimohamed@gmail.com; mohamed.rossafi@usmba.ac.ma}
\address{$^{2}$AMS Laboratory Faculty of Sciences Ben M’Sik, Hassan II University, Casablanca, Morocco}
\email{abdkrimkariprofes@gmail.com}
\date{
	\newline \indent $^{*}$Corresponding author}
\subjclass[2010]{Primary 47H10; Secondary 54H25.}
\keywords{Fixed point, controlled metric type space, extended rectangular b-metric space.}
\begin{abstract}
In this paper, we introduce an extension of rectangular metric spaces called controlled rectangular metric spaces, by changing the rectangular inequality as follows:
\begin{equation*}
	d(x, y)\leq\alpha(x, u)d(x, u)+\alpha(u, v)d(u, v)+\alpha(v, y)d(v, y),
\end{equation*}
for all distinct $x, y, u, v\in X$ with the function $\alpha: X\times X\rightarrow[1,\infty[$. We also establish some fixed point theorems for self-mappings defined on such spaces. Our main results extends and improves many results existing in the literature. Moreover, an illustrative example is presented to support the obtained results.
 \end{abstract}
\maketitle
\section{Introduction}
By a contraction on a metric space $(X, d)$, we understand a mapping $T: X \to X$ satisfying for all $x, y\in X$: $d(Tx, Ty)\leq kd(x, y)$, where $k$ is a real in $[0, 1)$.

In $1922$ Banach proved the following theorem.
\begin{theorem}\cite{BAN}
		Let $(X, d)$ be a complete metric space. Let $T : X \to X$ be a contraction. Then:
	\begin{itemize}
		\item [(i)] $T$ has a unique fixed point $x\in X$.
		\item [(ii)] For every $x_{0}\in X$, the sequence $(x_{n})$, where $x_{n+1} = Tx_{n}$, converges to $x$.
		\item [(iii)] We have the following estimate: For every $x\in X$, $d(x_{n}, x)\leq \dfrac{k^{n}}{1-k}d(x_{0}, x_{1})$, $n\in\mathbb{N}$.
	\end{itemize}
\end{theorem}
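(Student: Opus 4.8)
The plan is to run the classical Picard iteration argument, which goes through verbatim in any genuine metric space. I would begin by fixing an arbitrary $x_{0}\in X$ and setting $x_{n+1}=Tx_{n}$. Iterating the contraction inequality gives, for every $n$, the estimate $d(x_{n+1},x_{n})\leq k\,d(x_{n},x_{n-1})\leq\cdots\leq k^{n}\,d(x_{1},x_{0})$ on consecutive terms; this bound is the engine of the whole proof.

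The next step is to show that $(x_{n})$ is Cauchy. For $m>n$, repeated use of the triangle inequality together with the bound above yields
\begin{equation*}
d(x_{n},x_{m})\leq\sum_{i=n}^{m-1}d(x_{i+1},x_{i})\leq\left(\sum_{i=n}^{m-1}k^{i}\right)d(x_{1},x_{0})\leq\frac{k^{n}}{1-k}\,d(x_{1},x_{0}),
\end{equation*}
and since $k\in[0,1)$ the right-hand side tends to $0$ as $n\to\infty$. By completeness of $(X,d)$ the sequence converges to some $x\in X$, which proves part (ii).

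For part (i), I would check that this limit is a fixed point. Since a contraction is $k$-Lipschitz, hence continuous, $Tx=T\bigl(\lim_{n}x_{n}\bigr)=\lim_{n}Tx_{n}=\lim_{n}x_{n+1}=x$ (or, avoiding continuity, $d(Tx,x)\leq d(Tx,x_{n+1})+d(x_{n+1},x)\leq k\,d(x,x_{n})+d(x_{n+1},x)\to 0$). Uniqueness is immediate: if $Tx=x$ and $Ty=y$, then $d(x,y)=d(Tx,Ty)\leq k\,d(x,y)$, and $k<1$ forces $d(x,y)=0$, so $x=y$. Finally, letting $m\to\infty$ in the displayed Cauchy estimate, using continuity of the metric, gives $d(x_{n},x)\leq\frac{k^{n}}{1-k}\,d(x_{0},x_{1})$ for all $n\in\mathbb{N}$, which is (iii).

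Since we are working in an honest metric space, there is no real obstacle here: every inequality above is a direct consequence of the triangle inequality, and the geometric-series tail bound is valid precisely because $k\in[0,1)$. The only point requiring slight attention is justifying the passage to the limit in the error estimate, which is routine given that $d(\cdot,\cdot)$ is continuous in each variable. (By contrast, in the controlled rectangular metric spaces introduced in the sequel both the Cauchy estimate and this limiting step demand substantially more care, because the weight function $\alpha$ and the weaker rectangular inequality obstruct the geometric-series argument and the continuity of the distance.)
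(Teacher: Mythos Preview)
Your proof is correct and is precisely the standard Picard iteration argument for the classical Banach contraction principle. However, there is nothing to compare against: the paper does not supply a proof of this theorem at all. Theorem~1.1 is merely quoted from \cite{BAN} in the introduction as background motivation, and the authors immediately move on to generalisations. So your write-up is fine as a self-contained proof of the cited result, but it is not something the paper itself undertakes.
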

Several authors generalise the previous theorem in various directions \cite{BAK, CZ, GEO, kari44, kari22, kari33, KARO, KIR, MLK, RO, RD}.

In $2000$ Branciari  initiated the notion of rectangular metric space.
\begin{definition}
\cite{BRA}. Let $X$ be a non-empty set and $\ d:X\times X\rightarrow \mathbb{R}^{+}$ be a mapping such that for all $x,y$ $\in X$ and for all distinct points $u,v\in X$, each of them different from $x$ and $y,$ on has
\begin{itemize}
\item[(i)]  $d(x,y)=0$ if and only if $x=y$;
\item[(ii)]  $d(x,y)=d(y,x)$  for all distinct points $x,y\in X$; 
\item[(iii)]  $d(x,y)\leq d(x,u)+d(u,v)+d(v,y)$( the rectangular inequality).
\end{itemize}
Then $\left( X,d\right) $ is called an rectangular metric space.
\end{definition}
In $2019$, Z. Asim et al. introduce the concept of extended b-rectangular metric spaces. 
\begin{definition}
\cite{AZ}. Let $X$ be a non empty set, and $ \theta: X\times X\rightarrow \left[ 1,+\infty \right[ $. An extended b-rectangular metric is a function  $ d: X\times X\rightarrow \left[ 0,+\infty \right[ $
 such that for all $x,y$ $\in X$ and all distinct points $u,v\in
X,$ each of them distinct from $x$ and $y$ one has the following conditions:
\begin{itemize}
\item[($ d_1) $] $d\left( x,y\right) =0,$ if and only $x=y;$
\item[($ d_2) $] $d\left( x,y\right) =d\left( y,x\right) ;$
\item[($ d_3) $]$d\left( x,y\right) \leq\theta\left( x,y\right)\left[ d(x,u)+d(u,v)+d(v,y)\right] $.
\end{itemize}
Then $\left( X,d\right) $ is called a  extended rectangular b-metric space. 
\end{definition}
\begin{exemple}
\cite{AZ} Consider $ X=\lbrace	1,2,3,4,5 \rbrace$. Define $ \theta: X\times X\rightarrow \left[ 1,+\infty \right[ $ by
\begin{equation*}
\theta(x,y)=x+y+1\   \forall x,y\in X.
\end{equation*}
Define 
\begin{equation*}
d:X\times X\rightarrow \left[ 0,+\infty \right[ \ by
\end{equation*}
\begin{itemize}
 \item  $ d(x,x) =0 $ for all $ x,y\in X $;
  \item $ d(x,y) =d(y,x)  $ for all $ x,y\in X $;
 \item  $ d(1,3) =d(2,5)=70 , d(1,4) =1000$ and $d(1,5) =1200;$
  \item   $ d(1,2) =d(2,3)=d(3,4)=60, d(3,5)=d(4,5)=d(2,4)=400 $.
   \end{itemize}
Clearly, $ (X,d) $ is extended rectangular b-metric space.
\end{exemple}
In the next section, we introduce controlled rectangular metric space and established some fixed point results for such mappings in complete metric spaces and generalized the results of Kannan \cite{KAN}, Reitch \cite{RE} and Fisher \cite{FICH}.
\section{Main result}
We begin with the following definition.
\begin{definition}
Given a non-empty set $ X $ and $ \alpha: X\times X\rightarrow \left[ 1,+\infty \right[ $.

 The function  $d:X\times X\rightarrow \left[ 0,+\infty \right[ $ is called a controlled rectangular metric if 
\begin{itemize}
\item[($ d_1) $] $d\left( x,y\right) =0,$ if and only $x=y;$
\item[$ (d_2 $)] $d\left( x,y\right) =d\left( y,x\right) ;$
\item[$ (d_3 $)] $ d\left( x,y\right)$ $\leq \alpha\left( x,u\right)$ $d\left( x,u\right) +\alpha\left( u,v\right) d\left( u,v\right) +\alpha\left( v,y\right) d\left( v,y\right).$
\end{itemize}
Then $\left( X,d\right) $ is called a controlled rectangular metric space.  
\end{definition}
\begin{remark}
If, for all $ x,y\in X $, $\alpha\left( x,y\right)=s\geq 1$, then $ (X,d )$ is a rectangular b-metric space, which leads us to conclude that every rectangular b-metric space is a controlled rectangular metric space. Also a controlled rectangular metric space  is not general an extended rectangular b-metric space.
\end{remark}
\begin{exemple}
Consider  $ X= \lbrace1,2,3,4\rbrace$. Define $d:X\times X\rightarrow \left[ 0,+\infty \right[ $ by
\begin{itemize}
\item $ d(x,x) =0 $ for all $ x,y\in X $;
\item $ d(1,2) =\frac {1 }{2} $, $ d(1,3) =\frac {1 }{9} $, $ d(1,4) =\frac {1 }{16} $;
\item $ d(2,3) =\frac {1 }{12} $, $ d(2,4) =\frac {1 }{36} $, $ d(3,4) =\frac {1 }{49} $.
 \end{itemize}
Define $ \alpha: X\times X\rightarrow \left[ 1,+\infty \right[ $ by
\begin{equation*}
\alpha\left( x,y\right) =\max \left\{x,y\right\}, \forall x,y\in X . 
\end{equation*}
Thus, $ (d_1) $ and $ (d_2) $ are clearly true. We shale prove that $ (d_3) $ hold.
We have
\begin{equation*} 
d(1,2)=\frac {1 }{2} \leq\alpha\left( 1,3\right) d\left( 1,3\right) +\alpha\left( 3,4\right) d\left(3,4\right) +\alpha\left( 4,2\right) d\left( 4,2\right)=0.58,
\end{equation*}
and
\begin{equation*} 
 d(1,2)=\frac {1 }{2} \leq\alpha\left( 1,4\right)d\left( 1,4\right) +\alpha\left( 4,3\right) d\left(4,3\right) +\alpha\left( 3,2\right) d\left( 3,2\right)=0.52,
\end{equation*}
Similarly, other cases can be argued. Thus, for all $ x,y\in X $ with distinct $ u,v\in X $. 
We get,
\begin{equation*} d\left( x,y\right) \leq \alpha\left( x,u\right) d\left( x,u\right) +\alpha\left( u,v\right) d\left( u,v\right) +\alpha\left( v,y\right) d\left( v,y\right).
\end{equation*}
Hence, $ (X,d) $ is controlled rectangular metric space.
Note that 
\begin{equation*} 
 d(1,2)=\frac {1 }{2} >0.31=\alpha\left( 1,2\right)\left[ d\left( 1,3\right) + d\left(3,4\right) + d\left( 4,2\right)\right], 
\end{equation*}
 that is, $ d $ is not an extended rectangular b-metric for the $ \alpha=\theta $.
\end{exemple}
On the other hand, in example $ 2.4 $, we replaced $ d(3,4)=60 $ by $ d(3,4)=49 $, we prove that $ d$ is a extended rectangular b-metric.
Note that
\begin{equation*} 
 d(1,4)=1000 >992=\alpha\left( 1,2\right)d(1.2)+\alpha\left( 2,3\right)d(2.3) +\alpha\left(3,4\right)d(3.4). 
\end{equation*}
That is, $d$ is not a controlled  rectangular metric for the $ \theta=\alpha $.

We define Cauchy and convergent sequences in controlled rectangular metric space as follows:
\begin{definition}
Let $\left( X,d\right) $ be a controlled rectangular metric space. $\lbrace x_{n}\rbrace$ be a sequence in X and $x\in X$. The sequence $\lbrace x_{n}\rbrace$ converges to $x$ if and only if $\lim\limits_{n \rightarrow +\infty}d\left( x_{n},x\right)=0. $
\end{definition}
\begin{definition}
Let $(X,d)$ be a controlled rectangular metric space and $\lbrace x_{n}\rbrace$ be a sequence in $X$. We say that
\begin{itemize}	
\item[(i)] $\lbrace x_{n}\rbrace$ is a Cauchy if and only if for every $\varepsilon $ $>0$ there exists a positive integer $N=N(\varepsilon )$ such that $d(x_{n},x_{m})<\varepsilon $, for all $n, m\geq N.$
\item[(ii)] $(X,d)$ is complete if and only if each Cauchy sequence in $X$ is
convergent.
\end{itemize}
\end{definition}
\begin{lemma}
$\ $Let $\left( X,d\right) $ be a controlled rectangular metric space
 and $\lbrace x_{n}\rbrace$ a  Cauchy sequence in X. If $\lbrace x_{n}\rbrace$  converges to $x\in X$ and converges to $y\in X,$
 we assume that
 \begin{equation*}
\lim\limits_{n \rightarrow +\infty}\alpha(x_{n},x),\  \lim\limits_{n \rightarrow +\infty}\alpha(x,x_{n}) \ and  \  \lim\limits_{n,m \rightarrow +\infty}\alpha(x_{n},x_{m}) \  exist \ and \ are \ finite \  \forall n,m\in\mathbb{N},\ n\neq m.
\end{equation*}
then $x=y.$
\end{lemma}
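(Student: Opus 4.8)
The plan is to argue by contradiction: assume $x\neq y$ and use the controlled rectangular inequality $(d_3)$ on a well-chosen quadruple of points to force $d(x,y)\le 0$. The natural choice for the two intermediate points is a pair of consecutive terms of the sequence, so first I would dispose of a degenerate situation. If $\{x_{n}\}$ takes only finitely many distinct values, then, being Cauchy, it is eventually constant, say $x_{n}=c$ for all large $n$; then $d(x_{n},x)\to 0$ gives $d(c,x)=0$, i.e. $c=x$, and likewise $c=y$, so $x=y$ and we are done. Hence we may assume $\{x_{n}\}$ has infinitely many distinct values and, extracting greedily, pass to a subsequence $\{x_{n_{k}}\}$ whose terms are pairwise distinct; it is still Cauchy and still converges to both $x$ and $y$.

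Since the $x_{n_{k}}$ are pairwise distinct, at most one of them equals $x$ and at most one equals $y$, so for all $k$ large enough the four points $x$, $y$, $x_{n_{k}}$, $x_{n_{k+1}}$ are admissible for $(d_{3})$: the intermediate points $x_{n_{k}},x_{n_{k+1}}$ are distinct and each is different from $x$ and from $y$. Applying $(d_{3})$ for such $k$ gives
\begin{equation*}
d(x,y)\le \alpha(x,x_{n_{k}})\,d(x,x_{n_{k}})+\alpha(x_{n_{k}},x_{n_{k+1}})\,d(x_{n_{k}},x_{n_{k+1}})+\alpha(x_{n_{k+1}},y)\,d(x_{n_{k+1}},y).
\end{equation*}

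Now I let $k\to\infty$. By convergence, $d(x,x_{n_{k}})\to 0$ and $d(x_{n_{k+1}},y)\to 0$; and $d(x_{n_{k}},x_{n_{k+1}})\to 0$ because $\{x_{n_{k}}\}$ is Cauchy. By the standing hypothesis, $\lim_{k}\alpha(x,x_{n_{k}})$ and $\lim_{n,m}\alpha(x_{n},x_{m})$ exist and are finite, so the factors $\alpha(x,x_{n_{k}})$ and $\alpha(x_{n_{k}},x_{n_{k+1}})$ are bounded; applying the same hypothesis to the other limit point $y$ gives $\lim_{k}\alpha(x_{n_{k+1}},y)$ finite as well. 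Consequently each term on the right-hand side tends to $0$, so $d(x,y)\le 0$, and since $d\ge 0$ we get $d(x,y)=0$, i.e. $x=y$, the desired contradiction.

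I expect the only genuine subtlety to be the bookkeeping that makes the application of $(d_{3})$ legitimate, namely securing pairwise distinctness of the intermediate points and their distinctness from $x$ and $y$ — handled here by discarding the eventually-constant case and passing to a subsequence of distinct terms. The limiting step itself is routine once the boundedness of the $\alpha$-factors has been recorded. It is also worth noting that the hypotheses are literally phrased for the limit $x$ only, so one tacitly uses that, $y$ being a limit of the same Cauchy sequence, the corresponding $\alpha$-limits involving $y$ exist and are finite too.
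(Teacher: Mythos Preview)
Your argument is correct and follows the same route as the paper: apply $(d_{3})$ with two consecutive terms of the sequence as the intermediate points and let $n\to\infty$, using that the $\alpha$-factors stay bounded while the three distances go to $0$. The only difference is that you are more careful than the paper about the distinctness hypotheses required to invoke $(d_{3})$ (handling the eventually-constant case and passing to a subsequence of pairwise distinct terms), and you explicitly flag that the boundedness of $\alpha(x_{n_{k+1}},y)$ must also be assumed; the paper glosses over both points.
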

\begin{proof}
If  $\lbrace x_{n}\rbrace$ a Cauchy sequence in $ X $ has two limit point $ x,y\in X $, such that
 $$\lim\limits_{n \rightarrow +\infty}d\left( x_{n},x\right)=0 \ and \ \lim\limits_{n \rightarrow +\infty}d\left( x_{n},y\right)=0 .$$
 Since, $\lbrace x_{n}\rbrace$ is Cauchy, then so from $ (d_3 )$ we have
\begin{equation*}
 d\left( x,y\right) \leq \alpha\left( x,x_{n}\right) d\left( x,x_{n}\right) +\alpha\left( x_{n},x_{n+1}\right) d\left(x_{n},x_{n+1}\right) +\alpha\left( x_{n+1},y\right) d\left(x_{n+1},y\right).
\end{equation*}
By letting $n\rightarrow \infty $ in above inequality, we obtain $  d\left( x,y\right) \leq0 $, which implies that $d(x,y)=0.$
Therefore, $x=y.$  
\end{proof}
\begin{theorem}
Let $ (X,d) $ be a controlled rectangular metric space, and $ T $ a self mapping on $ X $. If there exists $k\in \left] 0,1\right[  $ such that  
\begin{equation}
d(Ty,Tx) >0\Rightarrow(Ty,Tx)\leq k d(x,y)
\end{equation}
For $ x_{0} \in X$, take $ x_{n}=T^{n}x_{0}  $. Suppose that
\begin{equation}
\lim_{i\rightarrow \infty }\sup_{m\geq 1}\alpha\left( x_{i+1}, x_{m}\right) \frac{\alpha\left( x_{i+1}, x_{i+2}\right)+\alpha\left( x_{i+2}, x_{i+3}\right)}{\alpha\left( x_{i}, x_{i+1}\right)+\alpha\left( x_{i+1}, x_{i+2}\right)}<\frac{1}{k^2}.
\end{equation}
We assume that, for $ x\in X $, we have 
\begin{equation}
\lim\limits_{n \rightarrow +\infty}\alpha(x_{n},x),\  \lim\limits_{n \rightarrow +\infty}\alpha(x,x_{n}) \ and  \  \lim\limits_{n,m \rightarrow +\infty}\alpha(x_{n},x_{m}) \  exist \ and \ are \ finite \  \forall n,m\in\mathbb{N},\ n\neq m.
\end{equation}
Then T has a unique fixed point in X.
\end{theorem}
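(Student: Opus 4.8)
The plan is the classical Picard scheme: run $x_n=T^nx_0$, prove $\{x_n\}$ is Cauchy, and then use completeness to extract the fixed point and argue uniqueness.

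\textbf{Step 1: preliminary reductions and geometric decay.} First note that the contractive hypothesis (2.1) in fact gives $d(Tx,Ty)\le k\,d(x,y)$ for \emph{all} $x,y\in X$, since when $d(Tx,Ty)=0$ the inequality is trivial. If $d(x_N,x_{N+1})=0$ for some $N$, then $x_N$ is a fixed point and there is nothing to prove, so assume $d(x_n,x_{n+1})>0$ for all $n$. I would next check that the iterates are pairwise distinct: if $x_n=x_{n+p}$ for some $p\ge1$, then applying $T$ once gives $x_{n+1}=x_{n+p+1}$, and iterating the contraction $p$ times yields $d(x_n,x_{n+1})=d(x_{n+p},x_{n+p+1})\le k^{p}d(x_n,x_{n+1})$, forcing $d(x_n,x_{n+1})=0$, a contradiction. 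Iterating the contraction also gives the two decay estimates
\[
d(x_n,x_{n+1})\le k^{n}d(x_0,x_1),\qquad d(x_n,x_{n+2})\le k^{n}d(x_0,x_2).
\]

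\textbf{Step 2: Cauchyness (the technical core).} Fix $m>n$. The cases $m-n\in\{1,2\}$ follow at once from the estimates above, so suppose $m-n\ge3$ and split according to parity. Since all iterates are distinct, $(d_3)$ applies to the quadruple $x_j,x_m,x_{j+1},x_{j+2}$ whenever $m\ge j+3$, giving
\[
d(x_j,x_m)\le\alpha(x_j,x_{j+1})d(x_j,x_{j+1})+\alpha(x_{j+1},x_{j+2})d(x_{j+1},x_{j+2})+\alpha(x_{j+2},x_m)d(x_{j+2},x_m).
\]
Starting from $j=n$ and iterating this peeling on the last summand, the process stops after finitely many steps: if $m-n$ is even it terminates at $d(x_{m-2},x_m)\le k^{m-2}d(x_0,x_2)$, and if $m-n$ is odd it terminates at $d(x_{m-3},x_m)$, to which one applies $(d_3)$ once more, producing three consecutive-gap terms down to $d(x_{m-1},x_m)\le k^{m-1}d(x_0,x_1)$. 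Using Step 1, $d(x_n,x_m)$ is therefore dominated by a finite sum whose typical block has the form
\[
\Big(\prod_{j}\alpha(x_{n+2j},x_m)\Big)\big[\alpha(x_i,x_{i+1})\,k^{i}+\alpha(x_{i+1},x_{i+2})\,k^{i+1}\big]\,d(x_0,x_1),
\]
plus one leftover term of order $k^{m-2}d(x_0,x_2)$. I would then bound the whole thing by the tail of a single series, uniform in $m$ thanks to the $\sup_{m\ge1}$ appearing in (2.2), and apply the ratio test. The hypothesis (2.2),
\[
\lim_{i\to\infty}\ \sup_{m\ge1}\ \alpha(x_{i+1},x_m)\,\frac{\alpha(x_{i+1},x_{i+2})+\alpha(x_{i+2},x_{i+3})}{\alpha(x_i,x_{i+1})+\alpha(x_{i+1},x_{i+2})}<\frac{1}{k^{2}},
\]
is exactly the condition ensuring this majorant series converges (the exponent $k^{2}$ reflecting that each block of the peeling swallows two consecutive gaps). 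Hence the corresponding tails tend to $0$ and $\{x_n\}$ is Cauchy. I expect the accurate bookkeeping of the nested $\alpha$-products, and pinning it down to the precise form of (2.2), to be the main difficulty of the proof.

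\textbf{Step 3: existence and uniqueness of the fixed point.} By completeness $x_n\to u$ for some $u\in X$, and Lemma~2.6 together with the finiteness assumption (2.3) shows this limit is well defined. To get $Tu=u$, argue by contradiction: if $x_n=u$ or $x_n=Tu$ for infinitely many $n$, then along such indices $d(Tu,u)$ equals $d(x_{n+1},u)$ or $d(x_n,u)$, which tends to $0$, so $Tu=u$; otherwise $u,Tu,x_n,x_{n+1}$ are distinct for all large $n$ and $(d_3)$ gives
\[
d(u,Tu)\le\alpha(u,x_n)d(u,x_n)+\alpha(x_n,x_{n+1})d(x_n,x_{n+1})+\alpha(x_{n+1},Tu)d(x_{n+1},Tu),
\]
where $d(x_{n+1},Tu)=d(Tx_n,Tu)\le k\,d(x_n,u)\to0$. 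Letting $n\to\infty$ and using (2.3) to keep the $\alpha$-factors bounded while the three distances vanish, we obtain $d(u,Tu)=0$, i.e. $Tu=u$. Finally, if $Tu=u$ and $Tv=v$ with $u\ne v$, then $d(u,v)=d(Tu,Tv)\le k\,d(u,v)$ forces $d(u,v)=0$; hence the fixed point is unique.
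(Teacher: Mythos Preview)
Your proposal is correct and follows essentially the same route as the paper: geometric decay of $d(x_n,x_{n+1})$ and $d(x_n,x_{n+2})$, distinctness of the iterates, Cauchyness via iterated application of $(d_3)$ split by parity and controlled through the ratio test using hypothesis~(2.2), then existence and uniqueness of the fixed point via the rectangular inequality and the contraction. The only cosmetic difference is that you absorb the ``step-two'' term $d(x_{m-2},x_m)\le k^{m-2}d(x_0,x_2)$ at the end of the even case while the paper peels off $d(x_n,x_{n+2})$ at the beginning; your Step~3 is in fact slightly more careful than the paper's in checking that $u,Tu,x_n,x_{n+1}$ are distinct before invoking $(d_3)$.
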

\begin{proof}
We choose any $ x_0 $ be arbitrary, define the iterative sequence $  \lbrace x_n\rbrace$ by
$$ x_1=Tx_0,\ x_2=Tx_1..., x_n=T^{ n}x_0 .$$
 step 1. We shall prove that
\begin{equation*}
\lim_{n\rightarrow \infty }d\left( x_{n},x_{n+1}\right) =0.
\end{equation*}
Now, by the hypothesis of theorem, we have
\begin{align*}
d\left( x_{n},x_{n+1}\right)&= d(T x_{n-1},T x_{n})\\
&\leq k d( x_{n-1}, x_{n})\\
& \leq k^{2} d( x_{n-2}, x_{n-1})\\
& \leq ...\\
&\leq k^{n} d( x_{0}, x_{1})
\end{align*}
 Taking the limit of the above inequality as $ n\rightarrow \infty $, we deduce that 
\begin{equation}
\lim_{n\rightarrow \infty } d\left( x_{n}, x_{n+1}\right)=0.
\end{equation}
 step 2. We shall prove that 
\begin{equation*}
\lim_{n\rightarrow \infty }d\left( x_{n},x_{n+2}\right) =0.
\end{equation*}
We assume that $x_{n}\neq x_{m}$ for every $n,m\in $ $\mathbb{N}.$
Indeed, suppose that $x_{n}= x_{m}$ for some $  n=m+k$  with $ k>0 $, so we have $ Tx_{n}=Tx_{m} $.
And
\begin{equation*}
d( x_{m},x_{m+1})=d( x_{n}, x_{n+1})=d( Tx_{n-1}, Tx_{n})\leq k d( x_{n-1}, x_{n}).
\end{equation*}
Since $k\in  \left] 0,1\right[$. Therefore,    
\begin{equation*}
d( x_{m}, x_{m+1})=d( x_{n}, x_{n+1})< d( x_{n-1}, x_{n}).
\end{equation*}
Continuing this process, we have
\begin{equation*}
d(x_{m}, x_{m+1})< d( x_{m}, x_{m+1}).
\end{equation*}
which is a contradiction. Therefore,
\begin{equation*} 
d( x_{m}, x_{n})>0 \ for\ every\ n,m\in \mathbb{N}, n\neq m 
\end{equation*}
Now, substituting $x=x_{n-1}$ and $y=x_{n+1}$ in the $ (2.1) $, we obtain
\begin{align*}
d( x_{n}, x_{n+2})&=d(T x_{n-1},T x_{n+1})\\
&\leq k d( x_{n-1}, x_{n+1})\\
& \leq k^{2} d( x_{n-2}, x_{n})\\
& \leq ...\\
&\leq k^{n} d( x_{0}, x_{2})
\end{align*} 
 If we take the limit of the above inequality as $ n\rightarrow \infty $ we deduce that 
\begin{equation}
\lim_{n\rightarrow \infty } d\left( x_{n}, x_{n+2}\right)=0.
\end{equation}
 step 3. We shall prove that, $x_{n}$ is a Cauchy sequence in $(X,d)$ i.e, 
\begin{equation*}
\lim_{n,m\rightarrow \infty }d\left( x_{n},x_{m}\right) =0\text{ \ }\forall
n,m\in \mathbb{N}.
\end{equation*}
Denote by $ d_{i}=d(x_{i}, x_{i+1})$ for all $ i\in\mathbf{N} $. We distinguish two cases.

Case 1: Assume that $ m=n+2l+1 $ with $ l\geq 1 $. By property $ (3) $ of the controlled rectangular metric spaces, we have
\begin{align*} 
d( x_{n},x_{m})&=d( x_{n},x_{n+2l+1})\\
&\leq \alpha(x_{n}, x_{n+1})d( x_{n}, x_{n+1})+\alpha(x_{n+1}, x_{n+2})d( x_{n+1}, x_{n+2})+\alpha(x_{n+2}, x_{n+2l+1})d( x_{n+2}, x_{n+2l+1})\\
&\leq \alpha(x_{n}, x_{n+1})d( x_{n}, x_{n+1})+\alpha(x_{n+1}, x_{n+2})d( x_{n+1}, x_{n+2})
 + \alpha(x_{n+2}, x_{n+2l+1}) \alpha( x_{n+2}, x_{n+3}))d( x_{n+2}, x_{n+3})\\
& + \alpha(x_{n+2}, x_{n+2l+1})\alpha( x_{n+3}, x_{n+4}))d( x_{n+3}, x_{n+4})+ \alpha(x_{n+2}, x_{n+2l+1})\alpha( x_{n+4}, x_{n+2k+1}))d( x_{n+4}, x_{n+2l+1})\\ 
&\leq \alpha(x_{n}, x_{n+1})d( x_{n}, x_{n+1})+\alpha(x_{n+1}, x_{n+2})d( x_{n+1}, x_{n+2})+
\alpha(x_{n+2}, x_{n+2l+1})\alpha( x_{n+2}, x_{n+3}))d( x_{n+2}, x_{n+3})\\ 
&+\alpha(x_{n+2}, x_{n+2l+1})\alpha( x_{n+3}, x_{n+4}))d( x_{n+3}, x_{n+4})\\
&+\alpha(x_{n+2}, x_{n+2l+1})\alpha( x_{n+4}, x_{n+2l+1}))\alpha(x_{n+4}, x_{n+5})d( x_{n+4}, x_{n+5})\\ 
&+\alpha(x_{n+2}, x_{n+2l+1})\alpha( x_{n+4}, x_{n+2l+1}))\alpha(x_{n+5}, x_{n+6})d( x_{n+5}, x_{n+6})\\ 
&+\alpha(x_{n+2}, x_{n+2l+1})\alpha( x_{n+4}, x_{n+2l+1}))\alpha(x_{n+6}, x_{n+2l+1})d( x_{n+6}, x_{n+2l+1})\\
&\leq \alpha(x_{n}, x_{n+1})d( x_{n}, x_{n+1})+\alpha(x_{n+1}, x_{n+2})d( x_{n+1}, x_{n+2})
+\alpha(x_{n+2}, x_{n+2l+1})\alpha( x_{n+2}, x_{n+3}))d( x_{n+2}, x_{n+3})\\ 
&+\alpha(x_{n+2}, x_{n+2l+1})\alpha( x_{n+3}, x_{n+4}))d( x_{n+3}, x_{n+4})\\
&+\alpha(x_{n+2}, x_{n+2l+1})\alpha( x_{n+4}, x_{n+2l+1}))\alpha(x_{n+4}, x_{n+5})d( x_{n+4}, x_{n+5})\\ 
&+\alpha(x_{n+2}, x_{n+2l+1})\alpha( x_{n+4}, x_{n+2l+1}))\alpha(x_{n+5}, x_{n+6})d( x_{n+5}, x_{n+6})\\
&+...+\alpha(x_{n+2}, x_{n+2l+1})\alpha( x_{n+4}, x_{n+2l+1}))\times...\times \alpha(x_{n+2l-2}, x_{n+2l+1})\times\\
&\left[\alpha(x_{n+2l-2}, x_{n+2l-1})d(x_{n+2l-2}, x_{n+2l-1})+\alpha(x_{n+2l-1}, x_{n+2l})d(x_{n+2l-1}, x_{n+2l})\right]\\
&+\alpha(x_{n+2}, x_{n+2l+1})\times \alpha(x_{n+4}, x_{n+2l+1})\times...\times \alpha(x_{n+2l-2}, x_{n+2l+1})\alpha(x_{n+2l}, x_{n+2l+1})d(x_{n+2l}, x_{n+2l+1})
\end{align*}
Therefore,
\begin{align*}
d(x_{n}, x_{m})\\
&\leq \alpha(x_{n}, x_{n+1})d_{n}+\alpha(x_{n+1}, x_{n+2})d_{n+1}+\alpha(x_{n+2}, x_{n+2l+1})\alpha( x_{n+2}, x_{n+3}))d_{n+2}\\
&+\alpha(x_{n+2}, x_{n+2l+1})\alpha( x_{n+3}, x_{n+4}))d_{n+3}
+ \alpha(x_{n+2}, x_{n+2l+1})\alpha( x_{n+4}, x_{n+2l+1})\alpha(x_{n+4}, x_{n+5})d_{n+4}\\
&+ \alpha(x_{n+2}, x_{n+2l+1})\alpha( x_{n+4}, x_{n+2l+1}))\alpha(x_{n+5}, x_{n+6})d_{n+5}\\
&+. .  . \\
&+\alpha(x_{n+2}, x_{n+2l+1})\times...\times\alpha(x_{n+2l-2}, x_{n+2l+1})\left[\alpha( x_{n+2l-2}, x_{n+2l+1}))d_{n+2l-2}+\alpha( x_{n+2l-1}, x_{n+2l}))d_{n+2l-1} \right]\\
& +\alpha(x_{n+2}, x_{n+2l+1})\alpha( x_{n+4}, x_{n+2l+1}))\times...\times\alpha(x_{n+2k-2}, x_{n+2l+1})\left[\alpha( x_{n+2l}, x_{n+2l+1}))d_{n+2l}\right]\\  
&\leq  \alpha(x_{n},x_{n+1})d(x_0,x_1) k^{n}+\alpha(x_{n+1}, x_{n+2})d(x_0,x_1)k^{n+1}\\
& +\alpha(x_{n+2}, x_{n+2l+1})\left[ \alpha( x_{n+2}, x_{n+3}))k^{n+2}+\alpha( x_{n+3}, x_{n+4}))k^{n+3}\right] d(x_0,x_1)\\ 
&+. .  .\\
&+\alpha(x_{n+2}, x_{n+2l+1})\times...\times\alpha(x_{n+2l-2}, x_{n+2l+1})\times\\
&\left[\alpha( x_{n+2l-2}, x_{n+2l-1}))k^{n+2l-2}+\alpha( x_{n+2l-1}, x_{n+2l}))k^{n+2l-2} \right]d(x_0,x_1)\\
&+\alpha(x_{n+2}, x_{n+2l+1})\alpha( x_{n+4}, x_{n+2l+1})\times...\times\alpha(x_{n+2l-2}, x_{n+2l+1})\alpha( x_{n+2l}, x_{n+2l+1}))k^{n+2l}d(x_0,x_1)\\  
&\leq  \alpha(x_{n},x_{n+1})d(x_0,x_1) k^{n}+\alpha(x_{n+1}, x_{n+2})d(x_0,x_1)k^{n+1}\\
&+\alpha(x_{n+2}, x_{n+2k+1})\left[ \alpha( x_{n+2}, x_{n+3}))k^{n+2}+\alpha( x_{n+3}, x_{n+4}))k^{n+3}\right] d(x_0,x_1)\\ 
&+. .  . \\
&+\alpha(x_{n+2}, x_{n+2l+1})\alpha(x_{n+4}, x_{n+2l+1})\times...\times\alpha(x_{n+2l-2}, x_{n+2l+1})\times\\
&\left[\alpha( x_{n+2l-2}, x_{n+2l-1}))k^{n+2l-2}+\alpha( x_{n+2l-1}, x_{n+2l}))k^{n+2l-2} \right]d(x_0,x_1)\\
&+\alpha(x_{n+2}, x_{n+2l+1})\alpha( x_{n+4}, x_{n+2l+1}))\times...\times 
\alpha(x_{n+2l-2}, x_{n+2l+1})\alpha(x_{n+2l}, x_{n+2l+1})\times\\
&\left[\alpha( x_{n+2l}, x_{n+2l+1})k^{n+2l}d(x_0,x_1)+\alpha( x_{n+2l+1}, x_{n+2l+2})k^{n+2l+1}d(x_0,x_1) \right]\\   
&\leq \alpha(x_{n},x_{n+1})d(x_0,x_1) k^{n}+\alpha(x_{n+1}, x_{n+2})d(x_0,x_1)k^{n+1}\\
&+\sum\limits_{\substack{i=n+2 }}^{i=n+2l}\prod_{ {j=n+2} }^{j=i} \alpha(x_{j},x_{n+2l+1})\left[ \alpha(x_{i},x_{i+1})k^{i}+\alpha(x_{i+1},x_{i+2})k^{i+1}\right]d(x_0,x_1). 
\end{align*}
Above, we make use of that $ \alpha(x,y)\geq 1 $. 
\begin{equation*}
Let\ S_{p}=\sum\limits_{\substack{i=0 }}^{i=p}\prod_{ {j=0} }^{j=i} \alpha(x_{j},x_{n+2l+1})\left[ \alpha(x_{i},x_{i+1})k^{i}+\alpha(x_{i+1},x_{i+1})k^{i+1}\right]d(x_0,x_1).  
\end{equation*}
Hence, we have
\begin{equation*} 
d(x_{n},x_{m})\leq d(x_0,x_1)\left[ \alpha(x_{n},x_{n+1}) k^{n}+\alpha(x_{n+1}, x_{n+2})k^{n+1}+ S_{n+m-1}- S_{n+1}\right]. 
\end{equation*}
Now, let the term $ a_i $ $ =\prod_{ {j=0} }^{j=i} \alpha(x_{j},x_{m})\left[ \alpha(x_{i},x_{i+1})k^{i}+\alpha(x_{i+1},x_{i+2})k^{i+1}\right]. $
On the other hand 
\begin{equation*}
\sup_{m\geq 1}\lim_{i\rightarrow \infty }\frac{a_{i+1}}{a_i}=\sup_{m\geq 1}\lim_{i\rightarrow \infty }\alpha\left( x_{i+1}, x_{m}\right) \frac{\alpha\left( x_{i+1}, x_{i+2}\right)k+\alpha\left( x_{i+2}, x_{i+3}\right)k^2}{\alpha\left( x_{i}, x_{i+1}\right)+\alpha\left( x_{i+1}, x_{i+2}\right)k}
\end{equation*}
\begin{equation*}
\leq \sup_{m\geq 1}\lim_{i\rightarrow \infty }\alpha\left( x_{i+1}, x_{m}\right) \frac{\alpha\left( x_{i+1}, x_{i+2}\right)+\alpha\left( x_{i+2}, x_{i+3}\right)}{\alpha\left( x_{i}, x_{i+1}\right)+\alpha\left( x_{i+1}, x_{i+2}\right)}<\frac{1}{k^2}.
\end{equation*}
 Thus the series $ \sum\limits_{\substack{i=n+2 }}^{i=\infty}\prod_{ {j=n+2} }^{j=i} \alpha(x_{j},x_{n+2l+1})\left[ \alpha(x_{i},x_{i+1})k^{i}+\alpha(x_{2i+1},x_{i+1})k^{i+1}\right]d(x_0,x_1) $ is converges.
On the other hand 
$$\lim_{n\rightarrow \infty } \alpha(x_{n},x_{n+1})d(x_0,x_1) k^{n}=\lim_{n\rightarrow \infty }\alpha(x_{n+1}, x_{n+2})d(x_0,x_1)k^{n+1}=0. $$
We conclude that
$$\lim_{n,m\rightarrow \infty }d(x_{n},x_{m})=0.  $$ 
Case $ 2 $: $ m=n+2l $ with Similarly to case $ 1 $ we have
\begin{equation*} 
d( x_{n},x_{n+2l})\leq \alpha(x_{n}, x_{n+2})d( x_{n}, x_{n+2})+\alpha(x_{n+2}, x_{n+3})d( x_{n+2}, x_{n+3})+\alpha(x_{n+3}, x_{n+2k})d( x_{n+3}, x_{n+2l+1})
\end{equation*}
\begin{equation*} 
\leq \alpha(x_{n}, x_{n+2})d( x_{n}, x_{n+2})+\alpha(x_{n+2}, x_{n+3})d( x_{n+2}, x_{n+3})+
\end{equation*}
\begin{equation*}
\alpha(x_{n+3}, x_{n+2l})\left[ \alpha( x_{n+3}, x_{n+4}))d( x_{n+3}, x_{n+4})+
\alpha( x_{n+4}, x_{n+5}))d( x_{n+4}, x_{n+5})+\alpha( x_{n+5}, x_{n+2l}))d( x_{n+5}, x_{n+2l})\right] 
\end{equation*}
\begin{equation*} 
\leq \alpha(x_{n}, x_{n+2})d( x_{n}, x_{n+2})+\alpha(x_{n+2}, x_{n+3})d( x_{n+2}, x_{n+3})+
\end{equation*}
\begin{equation*} 
\alpha(x_{n+3}, x_{n+2l})\alpha( x_{n+3}, x_{n+4}))d( x_{n+3}, x_{n+4})+\alpha(x_{n+3}, x_{n+2l})\alpha( x_{n+4}, x_{n+5}))d( x_{n+4}, x_{n+5})
\end{equation*}
\begin{equation*} 
+\alpha(x_{n+3}, x_{n+2l})\alpha( x_{n+5}, x_{n+2l}))d( x_{n+5}, x_{n+2l}) 
\end{equation*}
\begin{equation*} 
\leq \alpha(x_{n}, x_{n+2})d( x_{n}, x_{n+2})+\alpha(x_{n+2}, x_{n+3})d( x_{n+2}, x_{n+3})+
\end{equation*}
\begin{equation*} 
\alpha(x_{n+3}, x_{n+2l})\alpha( x_{n+3}, x_{n+4}))d( x_{n+3}, x_{n+4})+
\end{equation*}
\begin{equation*} 
+\alpha(x_{n+3}, x_{n+2l})\alpha( x_{n+5}, x_{n+2l}))\alpha(x_{n+5}, x_{n+6})d( x_{n+5}, x_{n+6})
\end{equation*}
\begin{equation*}
+. .  . +\alpha(x_{n+3}, x_{n+2l})\alpha( x_{n+5}, x_{n+2l}))\times...\times\alpha(x_{n+2l-3}, x_{n+2l})
\end{equation*}
\begin{equation*}
\left[ \alpha(x_{n+2l-3}, x_{n+2l-2})d( x_{n+2l-3}, x_{n+2l-2})+\alpha(x_{n+2l-2}, x_{n+2l-1})d( x_{n+2l-2}, x_{n+2l-1})\right] 
\end{equation*}
\begin{equation*}
 +\alpha(x_{n+3}, x_{n+2l})\alpha( x_{n+5}, x_{n+2l}))\times...\times\alpha(x_{n+2l-3}, x_{n+2l})\alpha(x_{n+2l-1}, x_{n+2l})d(x_{n+2l-1}, x_{n+2l})
\end{equation*}
 \begin{equation*} 
\leq \alpha(x_{n}, x_{n+2})k^{n}d( x_{0}, x_{2})+\alpha(x_{n+2}, x_{n+3})k^{n+2}d( x_{0}, x_{1})+
\end{equation*}
\begin{equation*} 
\alpha(x_{n+3}, x_{n+2l})\alpha( x_{n+3}, x_{n+4})))k^{n+3}d( x_{0}, x_{1})+
\end{equation*}
\begin{equation*}
+. .  . +\alpha(x_{n+3}, x_{n+2l})\alpha( x_{n+5}, x_{n+2k}))\times...\times\alpha(x_{n+2l-3}, x_{n+2l})
\end{equation*}
\begin{equation*}
\left[ \alpha(x_{n+2l-3}, x_{n+2l-2}))k^{n+l-3}d( x_{0}, x_{1})+\alpha(x_{n+2l-2}, x_{n+2k-1}))k^{n+l-2}d( x_{0}, x_{1}))\right] 
\end{equation*}
\begin{equation*}
 +\alpha(x_{n+3}, x_{n+2l})\alpha( x_{n+5}, x_{n+2l}))\times...\times\alpha(x_{n+2l-3}, x_{n+2l})\alpha(x_{n+2l-1}, x_{n+2l})
\end{equation*}
\begin{equation*}
\left[ \alpha(x_{n+2l-1}, x_{n+2l})k^{n+l-1}d( x_{0}, x_{1})+\alpha(x_{n+2l}, x_{n+2l})k^{n+l}d( x_{0}, x_{1})\right] 
\end{equation*}
Thus, we conclude
\begin{equation*} 
d(x_{n},x_{m})\leq \alpha(x_{n}, x_{n+2})k^{n}d( x_{0}, x_{2})+\alpha(x_{n+2}, x_{n+3})k^{n+2}d( x_{0}, x_{1})+
\end{equation*}
\begin{equation*}
+\sum\limits_{\substack{i=n+3 }}^{i=n+2l-1}\prod_{ {j=n+3} }^{j=i} \alpha(x_{j},x_{n+2l})\left[ \alpha(x_{i},x_{n+2l})k^{i}+\alpha(x_{i+1},x_{i+2})k^{i+2}\right]d(x_0,x_1)  
\end{equation*}
Above, we make use of that $ \alpha(x,y)\geq 1 $. 
\begin{equation*}
Let\ S_{q}=\sum\limits_{\substack{i=0 }}^{i=q}\prod_{ {j=0} }^{j=i} \alpha(x_{j},x_{n+2l})\left[ \alpha(x_{i},x_{i+1})k^{i}+\alpha(x_{i+1},x_{i+2})k^{i+1}\right]d(x_0,x_1).  
\end{equation*}
Hence, we have
\begin{equation*} 
d({x_n},{x_m})\leq d(x_0,x_2)\alpha(x_{n},x_{n+2}) k^{n}+ d(x_0,x_1)\left[\alpha(x_{n+2}, x_{n+3})k^{n+2}+ S_{m-1}- S_{n-2}\right] 
\end{equation*}
On the other hand 
\begin{align*}
\sup_{m\geq 1}\lim_{i\rightarrow \infty }\frac{a_{i+1}}{a_i}&=\sup_{m\geq 1}\lim_{i\rightarrow \infty }\alpha\left( x_{i+1}, x_{m}\right) \frac{\alpha\left( x_{i+1}, x_{i+2}\right)k+\alpha\left( x_{i+2}, x_{i+3}\right)k^2}{\alpha\left( x_{i}, x_{i+1}\right)+\alpha\left( x_{i+1}, x_{i+2}\right)k}\\
&\leq\sup_{m\geq 1}\lim_{i\rightarrow \infty }\alpha\left( x_{i+1}, x_{m}\right) \frac{\alpha\left( x_{i+1}, x_{i+2}\right)+\alpha\left( x_{i+2}, x_{i+3}\right)}{\alpha\left( x_{i}, x_{i+1}\right)+\alpha\left( x_{i+1}, x_{i+2}\right)}<\frac{1}{k^2}
\end{align*}
By using the Ratio Test, it is not difficult to that the series 
$$ \sum\limits_{\substack{i=0 }}^{i=\infty}\prod_{ {j=0} }^{j=i} \alpha(x_{j},x_{n+2l+1})\left[ \alpha(x_{i},x_{i+1})k^{i}+\alpha(x_{i+1},x_{i+1})k^{i+1}\right]d(x_0,x_1)$$ 
converges. Hence $ d(x_{n},x_{m}) $ is converges as $ n,m $ go toward $ \infty $ . Thus, by case1 and case 2, we have
\begin{equation}
 \lim_{n,m\rightarrow \infty }d(x{_n},x_{m})=0.
\end{equation}
We conclude that the sequence $ x_{n} $  is a Cauchy sequence in the complete controlled rectangular metric space $\left(X,d\right)  $, so $ x_{n} $ converges to some $ z\in X $.

We shall show that $ z $ is a fixed point of $ T. $

No, we show that $ d(Tz,z)=0 $. Arguing by contradiction, we assume that $ d(Tz,z)>0 $.
From  the controlled rectangular inequality we get, 
\begin{equation}
d\left( z,Tz\right) \leq \alpha(z,x_{n})d\left(z,x_{n}\right) +\alpha(x_{n},Tx_{n})d\left(
x_{n},Tx_{n}\right) +\alpha(Tx_{n},Tz)d\left( Tx_{n},Tz\right). 
\end{equation}
From assumption of the hypothesis, we have
\begin{equation}
d\left( z,Tz\right) \leq \alpha(z,x_{n})d\left(z,x_{n}\right) +\alpha(x_{n},Tx_{n})d\left(
x_{n},Tx_{n}\right) +k\alpha(Tx_{n},Tz)d\left( x_{n},z\right). 
\end{equation}
Therefore,
\begin{equation}
d\left( z,Tz\right)\leq d\left(z,x_{n}\right)\left[ \alpha(z,x_{n})+k\alpha(Tx_{n},Tz)\right]+\alpha(x_{n},Tx_{n})d\left(
x_{n},Tx_{n}\right). 
\end{equation}
By letting ${n\rightarrow \infty }$ above inequality and using $ (3.3) $, we deduce that $ d\left( z,Tz\right)=0 $ 
and that is $ Tz=0 $. Thus $ T $ has a fixed point $ z\in X $.

Uniqueness: assume there exist two fixed points of $ T $ say $ z $ and $ u $ such that $ z\neq u $.
By the contractive property of $ T $ we have 
\begin{equation*}
d(z,u)=d(Tz,Tu)\leq kd(z,u).
\end{equation*}
Which is a contradiction. Thus, $ T $ has a unique fixed point.
\end{proof}
 \begin{theorem}
Let $ (X,d) $ be a complete controlled rectangular metric space, and $ T $ a self mapping on $ X $ satisfying the following condition:
for all $ x,y\in X $ there exists $ 0 < k <\frac{1}{2} $ such that
\begin{equation}
 d(Tx,Ty)\leq k\left[ d(x,Tx)+d(y,Ty)\right].
\end{equation}
Let $ x_0\in X $, take $ x_{n}=T^{n}x_{0} $ Also, if 
\begin{equation}
\sup_{m\geq 1}\lim_{i\rightarrow \infty }\alpha\left( x_{i+1}, x_{m}\right) \frac{\alpha\left( x_{i+1}, x_{i+2}\right)+\alpha\left( x_{i+2}, x_{i+3}\right)}{\alpha\left( x_{i}, x_{i+1}\right)+\alpha\left( x_{i+1}, x_{i+2}\right)}<\frac{1}{k^2}. 
\end{equation}
We assume that
$ \lim\limits_{n \rightarrow +\infty}\alpha(x_{n},x),\  \lim\limits_{n \rightarrow +\infty}\alpha(x,x_{n}) \ and  \  \lim\limits_{n,m \rightarrow +\infty}\alpha(x_{n},x_{m}), $
 exist  and  are finite for all $n,m\in\mathbb{N},\ n\neq m$
Such that
 \begin{equation}
 \lim\limits_{n \rightarrow +\infty}\alpha(Tx_{n},Tx)<\frac{1}{k}.
\end{equation}
Then $ T $ has a unique fixed point in X.
\end{theorem}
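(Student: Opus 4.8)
The proof will follow the four--stage pattern of the preceding theorem, with the Banach estimate replaced by the Kannan one. Fix $x_{0}\in X$ and set $x_{n}=T^{n}x_{0}$. First I would establish that $d(x_{n},x_{n+1})\to0$: applying the contractive condition with $x=x_{n-1}$, $y=x_{n}$ gives
\[
d(x_{n},x_{n+1})=d(Tx_{n-1},Tx_{n})\leq k\bigl[d(x_{n-1},x_{n})+d(x_{n},x_{n+1})\bigr],
\]
hence $d(x_{n},x_{n+1})\leq\lambda\,d(x_{n-1},x_{n})$ with $\lambda=\frac{k}{1-k}$, and $\lambda\in(0,1)$ precisely because $k<\tfrac12$; iterating gives $d(x_{n},x_{n+1})\leq\lambda^{n}d(x_{0},x_{1})\to0$. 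Exactly as in the previous proof one disposes of the possibility $x_{n}=x_{m}$ for some $n\neq m$: such an equality together with the previous estimate yields $d(x_{m},x_{m+1})\leq\lambda^{\,n-m}d(x_{m},x_{m+1})$, which is impossible unless $d(x_{m},x_{m+1})=0$, i.e.\ unless $x_{m}$ is already a fixed point of $T$; so we may assume the iterates are pairwise distinct. Then, with $x=x_{n-1}$, $y=x_{n+1}$ in the contractive condition,
\[
d(x_{n},x_{n+2})=d(Tx_{n-1},Tx_{n+1})\leq k\bigl[d(x_{n-1},x_{n})+d(x_{n+1},x_{n+2})\bigr]\leq k\,(\lambda^{n-1}+\lambda^{n+1})\,d(x_{0},x_{1})\to0.
\]

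Next I would prove that $\{x_{n}\}$ is Cauchy, carried out exactly as in the preceding theorem but with the geometric factor $k^{i}$ replaced by $\lambda^{i}$. Writing $d_{i}=d(x_{i},x_{i+1})$ and separating the cases $m=n+2l+1$ and $m=n+2l$, one iterates the controlled rectangular inequality $(d_{3})$, collects the nested products of $\alpha$--values, and bounds each $d_{i}$ by $\lambda^{i}d(x_{0},x_{1})$ and each $d(x_{i},x_{i+2})$ by the estimate just obtained. The resulting majorant is a partial sum of the series with general term $a_{i}=\prod_{j}\alpha(x_{j},x_{m})\bigl[\alpha(x_{i},x_{i+1})\lambda^{i}+\alpha(x_{i+1},x_{i+2})\lambda^{i+1}\bigr]$, and the hypothesis on $\alpha$, via the ratio test as in the preceding theorem, forces this series to converge, so its tails vanish; together with $\alpha(x_{n},x_{n+1})\lambda^{n}\to0$ this gives $d(x_{n},x_{m})\to0$. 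By completeness, $x_{n}$ converges to some $z\in X$.

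To show $z$ is fixed, I argue by contradiction and assume $d(Tz,z)>0$. For all large $n$ the four points $z,\,x_{n},\,x_{n+1}=Tx_{n},\,Tz$ are distinct, so $(d_{3})$ gives
\[
d(z,Tz)\leq\alpha(z,x_{n})\,d(z,x_{n})+\alpha(x_{n},x_{n+1})\,d(x_{n},x_{n+1})+\alpha(Tx_{n},Tz)\,d(Tx_{n},Tz),
\]
while the contractive condition with $x=x_{n}$, $y=z$ yields $d(Tx_{n},Tz)\leq k\bigl[d(x_{n},x_{n+1})+d(z,Tz)\bigr]$. Substituting and collecting the term $d(z,Tz)$,
\[
\bigl(1-k\,\alpha(Tx_{n},Tz)\bigr)d(z,Tz)\leq\alpha(z,x_{n})\,d(z,x_{n})+\bigl(\alpha(x_{n},x_{n+1})+k\,\alpha(Tx_{n},Tz)\bigr)d(x_{n},x_{n+1}).
\]
Letting $n\to\infty$ and using $d(z,x_{n})\to0$, $d(x_{n},x_{n+1})\to0$, the finiteness of the relevant $\alpha$--limits, and $\lim_{n}\alpha(Tx_{n},Tz)<\tfrac1k$ (so the coefficient on the left stays bounded away from $0$), we obtain $d(z,Tz)\leq0$, a contradiction; hence $Tz=z$. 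Uniqueness is immediate: if $Tz=z$ and $Tw=w$ then $d(z,w)=d(Tz,Tw)\leq k\bigl[d(z,z)+d(w,w)\bigr]=0$, so $z=w$.

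The only genuinely delicate part is the Cauchy stage: the bookkeeping of the telescoping $\alpha$--products in the iterated rectangular inequality, and checking that the ratio test applies under the stated hypothesis. A secondary technicality, shared with the preceding theorem and usually passed over in silence, is verifying that the quadruples of points to which $(d_{3})$ is applied in the Cauchy stage and in the fixed--point stage are genuinely pairwise distinct, as the controlled rectangular inequality requires.
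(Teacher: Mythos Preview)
Your proposal is correct and follows essentially the same route as the paper: derive $d(x_{n},x_{n+1})\le\beta^{n}d(x_{0},x_{1})$ with $\beta=\tfrac{k}{1-k}$, deduce $d(x_{n},x_{n+2})\to0$ from the Kannan inequality with $x=x_{n-1}$, $y=x_{n+1}$, import the Cauchy argument verbatim from the preceding theorem with $\beta^{i}$ in place of $k^{i}$, and finish with the same rectangular-inequality estimate and uniqueness step. You are, if anything, slightly more careful than the paper (you explicitly rule out coincident iterates and flag the distinctness requirement in applying $(d_{3})$, both of which the paper passes over), and your closing remark about the ratio test is apt: when one substitutes $\beta^{i}$ for $k^{i}$ the convergence criterion naturally becomes $<1/\beta^{2}$ rather than the $<1/k^{2}$ stated in the theorem---a wrinkle the paper's ``similarly to Theorem~2.7'' also leaves implicit.
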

\begin{proof}
Let $ x_0\in X $ and define the sequence $ {x_n} $ as follows 
$ x_1=T{x_0},\  x_2=T{x_1},...,\ x_n=Tx_{n-1}=T^n{x_0},...$ 
Now we prove that 
\begin{equation*}
\lim_{n\rightarrow \infty}d(x_{n},x_{n+1})=0
\end{equation*}
and
\begin{equation*} 
\lim_{n\rightarrow \infty}d(x_{n},x_{n+2})=0.
\end{equation*}
Using the contractive property with $ x=x_{n-1} $ and $ y=x_{n} $, we obtain
\begin{equation*}
d(x_{n},x_{n+1})=d(Tx_{n-1},Tx_{n})\leq k\left[ d(x_{n-1},x_{n})+d(x_{n},x_{n+1})\right]
\end{equation*}
\begin{equation*}
\Rightarrow d(x_{n},x_{n+1})\leq \frac{k}{1-k}  d(x_{n-1},x_{n}).
\end{equation*}
Since $ 0 < k < \frac{1}{2}$, one can easily deduce that  $ 0<\frac{k}{1-k}<1$. So, let $ \beta= \frac{k}{1-k}$
hence, 
\begin{align*}
    d(x_{n},x_{n+1})      &  \leq \beta d(x_{n-1},x_{n})   \\
                          & \leq \beta^2 d(x_{n-2},x_{n-1})\\
                          & \leq...\\
                          & \leq \beta^{n} d(x_{0},x_{1}).\\
\end{align*}
Therefore,
\begin{equation} 
\lim_{n\rightarrow \infty}d(x_{n},x_{n+1})=0.
\end{equation}
Appliyin $ (2.10) $ with $ x=x_{n-1} $ and $ y= x_{n+1}$, we obtain
\begin{equation*}
d(x_{n},x_{n+2})=d(Tx_{n-1},Tx_{n+1})\leq k\left[ d(x_{n-1},x_{n})+d(x_{n+1},x_{n+2})\right].
\end{equation*} 
Thus, by using the fact that $ d(x_{n},x_{n+1})\rightarrow 0$ as $ n\rightarrow \infty$, we deduce that
\begin{equation} 
\lim_{n\rightarrow \infty}d(x_{n},x_{n+2})=0.
\end{equation}
Now, similarly to prove of Theorem $ (2.7) $, we deduce that the sequence $ \lbrace x_n\rbrace $ is a Cauchy sequence in $ X $. Since $ (X,d) $ is a complete controlled rectangular metric space, we conclude that $ x_n $ converges to some $ z $ in $ X. $

We shall show that $ z $ is a fixed point of $ T. $

No, we show that $ d(Tz,z)=0 $. From  the controlled rectangular inequality we get, 
\begin{equation}
d\left( z,Tz\right) \leq \alpha(z,x_{n})d\left(z,x_{n}\right) +\alpha(x_{n},Tx_{n})d\left(
x_{n},Tx_{n}\right) +\alpha(Tx_{n},Tz)d\left( Tx_{n},Tz\right). 
\end{equation}
From assumption of the hypothesis, we have
\begin{equation}
d\left( z,Tz\right) \leq \alpha(z,x_{n})d\left(z,x_{n}\right) +\alpha(x_{n},Tx_{n})d\left(
x_{n},Tx_{n}\right) +k \alpha(Tx_{n},Tz)\left[ d\left( x_{n},Tx_{n}\right)+d\left( z,Tz\right)\right].
\end{equation}
Therefore,
\begin{equation}
d\left( z,Tz\right) \leq \frac{1}{1	-k\alpha(Tx_{n},Tz)}\left[ \alpha(z,x_{n})d\left(z,x_{n}\right) +\alpha(x_{n},Tx_{n})d\left(
x_{n},Tx_{n}\right) + \alpha(Tx_{n},Tz) d\left( x_{n},Tx_{n}\right)\right] .
\end{equation}
Letting $n\rightarrow \infty $ in $\left( 2.17\right) $ and using $ (2.12) $, we obtain
\begin{equation*}
d\left( z,Tz\right) \leq0 .
\end{equation*}
 Which is a contradiction. Thus, $z= Tz $.
 
Uniqueness: assume there exist two fixed points of $ T $ say $ z $ and $ u $ such that $ z\neq u $.
By the contractive property of $ T $ we have 
\begin{equation*}
d(z,u)=d(Tz,Tu)\leq k\left[ d(z,Tz)+d(u,Tu)\right]=0.
\end{equation*}
Hence $ z=u $.
\end{proof}
\begin{theorem}
Let $ (X,d) $ be a complete controlled rectangular metric space, and $ T $ a self mapping on $ X $ satisfying the following condition:
for all $ x,y\in X $ there exists $ 0 < \lambda <\frac{1}{3} $ such that
\begin{equation}
d(Tx,Ty)\leq \lambda\left[d(x,y)+ d(x,Tx)+d(y,Ty)\right].
\end{equation}
Let $ x_0\in X $, take $ x_{n}=T^{n}x_{0} $ Also, if 
\begin{equation}
\sup_{m\geq 1}\lim_{i\rightarrow \infty }\alpha\left( x_{i}, x_{m}\right) \frac{\alpha\left( x_{i+1}, x_{i+2}\right)+\alpha\left( x_{i+2}, x_{i+3}\right)}{\alpha\left( x_{i}, x_{i+1}\right)+\alpha\left( x_{i+1}, x_{i+2}\right)}<\frac{1}{k^2}.
\end{equation}
We assume that
$ \lim\limits_{n \rightarrow +\infty}\alpha(x_{n},x),\  \lim\limits_{n \rightarrow +\infty}\alpha(x,x_{n}) \ and  \  \lim\limits_{n,m \rightarrow +\infty}\alpha(x_{n},x_{m}), $
 exist  and  are finite for all $n,m\in\mathbb{N},\ n\neq m$
Such that
 \begin{equation}
  \lim\limits_{n \rightarrow +\infty}\alpha(x_{n},T^{2}x_{n})<\frac{1}{\lambda}, \\
    \lim\limits_{n \rightarrow +\infty}\alpha(Tx_{n},Tx)<\frac{1}{\lambda}\ and \ \lim\limits_{n \rightarrow +\infty}\alpha(Tx,Tx_{n})<\frac{1}{\lambda}.
\end{equation}
Then $ T $ has a unique fixed point in X.
\end{theorem}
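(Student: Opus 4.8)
The plan is to mirror the three-stage argument used for Theorems~2.7 and 2.9: first bound the one-step and two-step distances along the Picard orbit $x_n=T^n x_0$, then bootstrap to a Cauchy estimate through the controlled rectangular inequality $(d_3)$, and finally identify the limit as the unique fixed point of $T$.

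\textbf{Step 1 (orbit estimates).} Taking $x=x_{n-1}$, $y=x_n$ in (2.18) and using $d(x_{n-1},Tx_{n-1})=d(x_{n-1},x_n)$ and $d(x_n,Tx_n)=d(x_n,x_{n+1})$, I obtain $d(x_n,x_{n+1})\le\lambda\bigl[2d(x_{n-1},x_n)+d(x_n,x_{n+1})\bigr]$, hence $d(x_n,x_{n+1})\le\mu\,d(x_{n-1},x_n)$ with $\mu:=2\lambda/(1-\lambda)\in(0,1)$, since $\lambda<\tfrac{1}{3}$. Iterating gives $d(x_n,x_{n+1})\le\mu^n d(x_0,x_1)\to0$. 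Exactly as in Theorem~2.7, the strict decrease of $n\mapsto d(x_n,x_{n+1})$ (unless some $x_n$ is already fixed, in which case we are done) rules out $x_n=x_m$ for $n\neq m$, so all iterates are pairwise distinct and every invocation of $(d_3)$ below is legitimate. For the two-step distance, (2.18) with $x=x_{n-1}$, $y=x_{n+1}$ yields $d(x_n,x_{n+2})\le\lambda\,d(x_{n-1},x_{n+1})+\lambda\bigl[d(x_{n-1},x_n)+d(x_{n+1},x_{n+2})\bigr]$; writing $a_n=d(x_n,x_{n+2})$ this is $a_n\le\lambda a_{n-1}+\varepsilon_n$ with $\varepsilon_n\to0$ and $\lambda\in(0,1)$, whence $a_n\to0$.

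\textbf{Step 2 (Cauchy sequence).} This is the technical core and the step I expect to be the main obstacle. Splitting the gap $m-n$ into the odd case $m=n+2l+1$ and the even case $m=n+2l$, and expanding $d(x_n,x_m)$ by repeated application of $(d_3)$ precisely as in the proof of Theorem~2.7, one bounds $d(x_n,x_m)$ by the two leading terms $\alpha(x_n,x_{n+1})\mu^n d(x_0,x_1)+\alpha(x_{n+1},x_{n+2})\mu^{n+1}d(x_0,x_1)$ (together with the analogous $d(x_0,x_2)$-term in the even case) plus a tail of the series $\sum_i\Bigl(\prod_j\alpha(x_j,x_m)\Bigr)\bigl[\alpha(x_i,x_{i+1})\mu^i+\alpha(x_{i+1},x_{i+2})\mu^{i+1}\bigr]d(x_0,x_1)$. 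Condition (2.19), with $\mu$ playing the role of the contraction constant, is exactly what the ratio test requires for this series to converge, while the leading terms vanish because $\alpha(x_n,x_{n+1})$ and $\alpha(x_{n+1},x_{n+2})$ stay bounded by the limit hypotheses and $\mu^n\to0$. Hence $d(x_n,x_m)\to0$, so $\{x_n\}$ is Cauchy, and by completeness $x_n\to z$ for some $z\in X$.

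\textbf{Step 3 (fixed point and uniqueness).} Suppose $Tz\neq z$. For $n$ large the four points $z,x_n,x_{n+1},Tz$ are pairwise distinct (at most one iterate can equal $z$, at most one can equal $Tz$), so $(d_3)$ gives $d(z,Tz)\le\alpha(z,x_n)d(z,x_n)+\alpha(x_n,x_{n+1})d(x_n,x_{n+1})+\alpha(x_{n+1},Tz)d(Tx_n,Tz)$, and (2.18) with $x=x_n$, $y=z$ bounds $d(Tx_n,Tz)\le\lambda\bigl[d(x_n,z)+d(x_n,x_{n+1})+d(z,Tz)\bigr]$. Collecting the $d(z,Tz)$ contributions gives $\bigl(1-\lambda\,\alpha(Tx_n,Tz)\bigr)d(z,Tz)\le\alpha(z,x_n)d(z,x_n)+\alpha(x_n,x_{n+1})d(x_n,x_{n+1})+\lambda\,\alpha(Tx_n,Tz)\bigl[d(x_n,z)+d(x_n,x_{n+1})\bigr]$. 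Letting $n\to\infty$, the right-hand side tends to $0$ while $1-\lambda\,\alpha(Tx_n,Tz)$ stays bounded away from $0$ by the hypothesis in (2.20) that $\lim_n\alpha(Tx_n,Tx)<\tfrac{1}{\lambda}$ evaluated at $x=z$; the two remaining bounds in (2.20) (on $\lim_n\alpha(x_n,T^2x_n)$ and $\lim_n\alpha(Tx,Tx_n)$) secure the symmetric variant of this estimate, obtained by interposing $T^2x_n=x_{n+2}$ in place of $Tx_n$. Hence $d(z,Tz)\le0$, i.e.\ $Tz=z$. Finally, if $Tz=z$ and $Tu=u$ with $z\neq u$, then (2.18) yields $d(z,u)=d(Tz,Tu)\le\lambda\,d(z,u)$ with $\lambda<1$, forcing $d(z,u)=0$, a contradiction. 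Therefore $T$ has a unique fixed point.
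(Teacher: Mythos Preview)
Your three-stage outline matches the paper's proof, and the argument is correct. The one genuine divergence is your treatment of $d(x_n,x_{n+2})\to 0$. You observe directly that $a_n:=d(x_n,x_{n+2})$ satisfies $a_n\le\lambda a_{n-1}+\varepsilon_n$ with $\varepsilon_n=\lambda\bigl[d(x_{n-1},x_n)+d(x_{n+1},x_{n+2})\bigr]\to 0$, and conclude $a_n\to 0$ from this linear recursion alone. The paper instead expands $d(x_{n-1},x_{n+1})$ twice via the controlled rectangular inequality $(d_3)$, picks up a term $\lambda\,\alpha(x_n,x_{n+2})\,d(x_n,x_{n+2})$ on the right, and must then solve for $d(x_n,x_{n+2})$; this is precisely where the hypothesis $\lim_n\alpha(x_n,T^2x_n)<1/\lambda$ from (2.20) enters, to guarantee $1-\lambda\,\alpha(x_n,x_{n+2})>0$ for large $n$. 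Your route is shorter and shows that this particular hypothesis is in fact dispensable. Consequently, your remark at the end of Step~3 that the bound on $\lim_n\alpha(x_n,T^2x_n)$ ``secures the symmetric variant'' of the fixed-point estimate is misplaced: in the paper that hypothesis is consumed in Step~2, not in Step~3, and your own argument simply does not need it. The fixed-point step itself and the uniqueness argument coincide with the paper's.
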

\begin{proof}
Let $ x_0\in X $ and define the sequence $ {x_n} $ as follows 
$ x_1=T{x_0},\  x_2=T{x_1},...,\ x_n=Tx_{n-1}=T^n{x_0},...$ 
Now we prove that 
\begin{equation*}
\lim_{n\rightarrow \infty}d(x_{n},x_{n+1})=0
\end{equation*}
and\begin{equation*} 
\lim_{n\rightarrow \infty}d(x_{n},x_{n+2})=0.
\end{equation*}
Using the contractive property, we have
\begin{align*}
    d(x_{n},x_{n+1})=d(Tx_{n-1},Tx_{n}) &  \leq \lambda\left[ d(x_{n-1},x_{n})+d(x_{n-1},x_{n})+d(x_{n},x_{n+1})\right]. \\
\end{align*}
So, we have
\begin{align*}
d(x_{n},x_{n+1})\leq \frac{2\lambda}{1-\lambda}  d(x_{n-1},x_{n}).
\end{align*}
Since $ 0 <\lambda < \frac{1}{3}$, one can easily deduce that  $ 0< \frac{2\lambda}{1-\lambda}<1$. So, let $ \beta= \frac{2\lambda}{1-\lambda}.$\\
Hence, 
\begin{align*}
    d(x_{n},x_{n+1})      &  \leq \beta d(x_{n-1},x_{n})   \\
                          & \leq \beta^2 d(x_{n-2},x_{n-1})\\
                          & \leq...\\
                          & \leq\beta^{n} d(x_{0},x_{1}).\\
\end{align*}
Therefore,
\begin{equation} 
\lim_{n\rightarrow \infty}d(x_{n},x_{n+1})=0.
\end{equation}
Applying $ (2.18) $ with $ x=x_{n-1} $ and $ y=x_{n+1} $, we obtain 
\begin{align*}
d(x_{n},x_{n+2})=d(Tx_{n-1},Tx_{n+1})&\leq\lambda\left[ d(x_{n-1},x_{n})+d(x_{n+1},x_{n+2}+d(x_{n-1},x_{n+1})\right] 
\end{align*}
Using the property $ (3)$ of the controlled rectangular metric space we get, 
\begin{align*}
d(x_{n},x_{n+2}) &  \leq\lambda\left[ d(x_{n-1},x_{n+1})+d(x_{n-1},x_{n})+d(x_{n+1},x_{n+2})\right]\\
&\leq \lambda \left( d(x_{n-1},x_{n})+d(x_{n+1},x_{n+2})\right)\\
& +\lambda \left[\alpha(x_{n-1},x_{n+2}) d(x_{n-1},x_{n+2})+\alpha(x_{n+2},x_{n}) d(x_{n+2},x_{n})+\alpha(x_{n},x_{n+1}) d(x_{n},x_{n+1})\right]\\
&\leq \lambda \left( d(x_{n-1},x_{n})+d(x_{n+1},x_{n+2})\right)\\
&+ \lambda \left[\alpha(x_{n+2},x_{n}) d(x_{n+2},x_{n})+\alpha(x_{n},x_{n+1}) d(x_{n},x_{n+1})\right]\\
& +\lambda \alpha(x_{n-1},x_{n+2})\left[ \alpha(x_{n-1},x_{n})d(x_{n-1},x_{n})+\alpha(x_{n},x_{n+1})d(x_{n},x_{n+1})+ \alpha(x_{n+1},x_{n+2})d(x_{n+1},x_{n+2}))\right]
\end{align*}
Therefore, we have
\begin{equation}
d(x_{n},x_{n+2})\leq d(x_{n-1},x_{n}) \left[ \frac{2\lambda+\lambda \alpha(x_{n},x_{n+1})+\lambda \alpha(x_{n-1},x_{n+2})\left[  \alpha(x_{n-1},x_{n}))  +\alpha(x_{n},x_{n+1})+\alpha(x_{n+1},x_{n+2})\right]  }{1-\lambda \alpha(x_{n},x_{n+2})}\right]  
\end{equation}
Letting $n\rightarrow \infty $ in $\left( 2.22\right) $, using $ (2.20 )$, and $(2.21 )$, we obtain
\begin{equation} 
\lim_{i\rightarrow \infty}d(x_{n},x_{n+2})=0.
\end{equation}
Now, similarly to prove of Theorem $ (2.7) $, we deduce that the sequence $\lbrace x_n\rbrace $ is a Cauchy sequence. Since $ (X,d) $ is a complete controlled rectangular metric space, we conclude that $ x_n $ converges to some $ z $ in $ X. $

We shall show that $ z $ is a fixed point of $ T.$
No, we show that $ d(Tz,z)=0 $. From  the controlled rectangular inequality we get, 
\begin{equation}
d\left( z,Tz\right) \leq \alpha(z,x_{n})d\left(z,x_{n}\right) +\alpha(x_{n},Tx_{n})d\left(
x_{n},Tx_{n}\right) +\alpha(Tx_{n},Tz)d\left( Tx_{n},Tz\right).
\end{equation}
From assumption of the hypothesis, we have
\begin{equation}
d\left( z,Tz\right) \leq \alpha(z,x_{n})d\left(z,x_{n}\right) +\alpha(x_{n},Tx_{n})d\left(
x_{n},Tx_{n}\right) +\lambda \alpha(Tx_{n},Tz)\left[ d\left( x_{n},Tx_{n}\right)+d\left( z,Tz\right)+d\left( x_{n},z\right)\right].
\end{equation}
Letting $n\rightarrow \infty $ in $\left( 2.25\right) $, we obtain
\begin{align*}
d\left( z,Tz\right)& \leq \lambda \lim_{n\rightarrow \infty }\alpha(Tx_{n},Tz)\left[ d\left( z,Tz\right)\right]< d\left( z,Tz\right). 
\end{align*}
Which is a contradiction. Thus, $z= Tz $.

Uniqueness: assume there exist two fixed points of $ T $ say $ z $ and $ u $ such that $ z\neq u $.
By the contractive property of $ T $ we have 
\begin{equation*}
d(z,u)=d(Tz,Tu)\leq\lambda\left[d(z,u)+ d(z,Tz)+d(u,Tu)\right]
= \lambda d(z,Tz)<d(z,u).
\end{equation*}
Hence $ z=u $.
\end{proof}
In the following we prove some new fixed point result for rational contraction of Fisher \cite{FICH} type in the context of controlled rectangular metric space. 
 \begin{theorem}
Let $ (X,d) $ be a complete controlled rectangular metric space, and $ T $ a self mapping on $ X $ satisfying the following condition:
for all $ x,y\in X $ there exists $ \lambda,\beta \in\left]0,1 \right[ $ where  $  \lambda+\beta <1 $. For $ x_{0}\in X$, take $ x_{n}=T^{n}x_0 $. Such that
\begin{equation}
 d(Tx,Ty)\leq \lambda d(x,y)+ \beta \frac{ d(x,Tx)+d(y,Ty)}{1+ d(x,y)} .
\end{equation}
Also, if 
\begin{equation}
\sup_{m\geq 1}\lim_{i\rightarrow \infty }\alpha\left( x_{i+1}, x_{m}\right) \frac{\alpha\left( x_{i+1}, x_{i+2}\right)+\alpha\left( x_{i+2}, x_{i+3}\right)}{\alpha\left( x_{i}, x_{i+1}\right)+\alpha\left( x_{i+1}, x_{i+2}\right)}<\frac{1}{(\beta+\lambda)^{2}}. 
\end{equation}
We assume that
$ \lim\limits_{n \rightarrow +\infty}\alpha(x_{n},x),\  \lim\limits_{n \rightarrow +\infty}\alpha(x,x_{n}) \ and  \  \lim\limits_{n,m \rightarrow +\infty}\alpha(x_{n},x_{m}), $
 exist  and  are finite for all $n,m\in\mathbb{N},\ n\neq m$
Such that
 \begin{equation}
  \lim\limits_{n,m \rightarrow +\infty}\alpha(x_{n},x_{m})<\frac{1}{(\beta+\lambda)}, \\
    \lim\limits_{n \rightarrow +\infty}\alpha(x_{n},x)<\frac{1}{(\beta+\lambda)}\ and \ \lim\limits_{n \rightarrow +\infty}\alpha(x_{n},x)<\frac{1}{(\beta+\lambda)}.
\end{equation}
Then $ T $ has a unique fixed point in X.
\end{theorem}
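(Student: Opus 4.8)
The plan is to follow the same three-step scheme used in the proof of Theorem~2.7, adapted to the rational Fisher-type contraction. First I would fix $x_0\in X$, set $x_n = T^n x_0$, and show $\lim_{n\to\infty} d(x_n,x_{n+1}) = 0$. Applying $(2.27)$ with $x = x_{n-1}$, $y = x_n$ gives
\begin{equation*}
d(x_n,x_{n+1}) \leq \lambda\, d(x_{n-1},x_n) + \beta\,\frac{d(x_{n-1},x_n) + d(x_n,x_{n+1})}{1 + d(x_{n-1},x_n)} \leq \lambda\, d(x_{n-1},x_n) + \beta\bigl(d(x_{n-1},x_n) + d(x_n,x_{n+1})\bigr),
\end{equation*}
using $1 + d(x_{n-1},x_n) \geq 1$. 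Rearranging, since $\lambda + \beta < 1$ forces $\beta < 1$, we get $d(x_n,x_{n+1}) \leq \frac{\lambda+\beta}{1-\beta}\, d(x_{n-1},x_n)$, and one checks $\gamma := \frac{\lambda+\beta}{1-\beta} \in (0,1)$ because $\lambda + \beta < 1$. Iterating yields $d(x_n,x_{n+1}) \leq \gamma^n d(x_0,x_1) \to 0$. A similar computation using $x = x_{n-1}$, $y = x_{n+1}$ together with the already-established $d(x_n,x_{n+1})\to 0$ gives $\lim_{n\to\infty} d(x_n,x_{n+2}) = 0$; here one also needs the elementary argument (as in Theorem~2.7, Step~2) that $x_n \neq x_m$ for $n\neq m$, so that $d(x_n,x_{n+2}) > 0$ and the contraction applies.

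Second, I would show $\{x_n\}$ is Cauchy by repeating the two-case telescoping estimate (Case 1: $m = n + 2\ell + 1$; Case 2: $m = n + 2\ell$) from Theorem~2.7 verbatim, with the single change that the contraction constant $k$ is replaced throughout by $\gamma = \frac{\lambda+\beta}{1-\beta} \le \lambda+\beta$ (or one simply bounds $\gamma \le \beta + \lambda$ and uses that). The series whose convergence must be checked is exactly
\begin{equation*}
\sum_{i} \ \prod_{j} \alpha(x_j, x_{n+2\ell+1})\bigl[\alpha(x_i,x_{i+1})\gamma^i + \alpha(x_{i+1},x_{i+2})\gamma^{i+1}\bigr] d(x_0,x_1),
\end{equation*}
and by the ratio test its convergence follows from hypothesis $(2.28)$, namely
\begin{equation*}
\sup_{m\geq 1}\lim_{i\to\infty} \alpha(x_{i+1},x_m)\,\frac{\alpha(x_{i+1},x_{i+2}) + \alpha(x_{i+2},x_{i+3})}{\alpha(x_i,x_{i+1}) + \alpha(x_{i+1},x_{i+2})} < \frac{1}{(\beta+\lambda)^2},
\end{equation*}
since $\gamma^2 \le (\beta+\lambda)^2$. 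Together with $\alpha(x_n,x_{n+1})\gamma^n \to 0$ (which needs the finiteness of $\lim_{n}\alpha(x_n,x_{n+1})$, guaranteed by the standing hypothesis), this gives $\lim_{n,m\to\infty} d(x_n,x_m) = 0$. Completeness then provides $z\in X$ with $x_n \to z$.

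Third, I would show $Tz = z$. Arguing by contradiction, assume $d(z,Tz) > 0$. By the controlled rectangular inequality,
\begin{equation*}
d(z,Tz) \leq \alpha(z,x_n)\,d(z,x_n) + \alpha(x_n,Tx_n)\,d(x_n,Tx_n) + \alpha(Tx_n,Tz)\,d(Tx_n,Tz),
\end{equation*}
and $d(Tx_n,Tz) = d(Tx_{n}, Tz) \leq \lambda\, d(x_n,z) + \beta\,\frac{d(x_n,Tx_n) + d(z,Tz)}{1 + d(x_n,z)}$. Substituting, collecting the $d(z,Tz)$ term on the left, and letting $n\to\infty$ — using $d(z,x_n)\to 0$, $d(x_n,Tx_n) = d(x_n,x_{n+1})\to 0$, and the bounds $\lim_n \alpha(x_n,x) < \frac{1}{\beta+\lambda}$ and $\lim_{n,m}\alpha(x_n,x_m) < \frac{1}{\beta+\lambda}$ from $(2.29)$ to control the coefficient $\beta\,\alpha(Tx_n,Tz)$ so that $1 - \beta\lim_n\alpha(Tx_n,Tz) > 0$ — forces $d(z,Tz) \leq 0$, a contradiction. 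Hence $Tz = z$. For uniqueness, if $Tz = z$ and $Tu = u$ with $z \neq u$, then $(2.27)$ gives $d(z,u) = d(Tz,Tu) \leq \lambda\, d(z,u) + \beta\,\frac{0 + 0}{1 + d(z,u)} = \lambda\, d(z,u) < d(z,u)$, a contradiction, so $z = u$.

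The main obstacle is Step~3: one must be careful that the limit $\lim_n \alpha(Tx_n, Tz)$ exists and is strictly less than $\frac{1}{\beta+\lambda}$ so that the factor $1 - \beta\lim_n\alpha(Tx_n,Tz)$ stays positive when dividing — this is precisely why hypothesis $(2.29)$ is imposed. A secondary technical point (shared with Theorem~2.7) is the justification that the iterates are pairwise distinct, without which the contraction $(2.27)$ cannot be invoked termwise in the telescoping sum; this is handled by the same short contradiction argument as in Step~2 of Theorem~2.7. Everything else is a routine transcription of the Theorem~2.7 argument with $k \rightsquigarrow \beta + \lambda$.
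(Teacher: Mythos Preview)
Your overall three-step scheme matches the paper's, but Step~1 contains a genuine error that stems from reading the contractive inequality with a \emph{sum} $d(x,Tx)+d(y,Ty)$ in the numerator. The paper's own proof (and Fisher's original inequality it cites) actually uses the \emph{product} $d(x,Tx)\,d(y,Ty)$; the ``$+$'' in the displayed statement is a typo. With the product, applying the inequality to $x=x_{n-1}$, $y=x_n$ gives
\[
d(x_n,x_{n+1}) \leq \lambda\, d(x_{n-1},x_n) + \beta\,\frac{d(x_{n-1},x_n)\,d(x_n,x_{n+1})}{1+d(x_{n-1},x_n)} \leq \lambda\, d(x_{n-1},x_n) + \beta\, d(x_n,x_{n+1}),
\]
so the paper obtains $\gamma=\dfrac{\lambda}{1-\beta}$, and \emph{this} $\gamma$ is indeed in $(0,1)$ precisely because $\lambda+\beta<1$; moreover $\gamma\leq \lambda+\beta$ (equivalent to $\beta(1-\lambda-\beta)\geq 0$), which is what makes the ratio-test hypothesis with bound $\frac{1}{(\lambda+\beta)^2}$ suffice.

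Under your sum reading, both numerical claims you make fail. First, $\gamma=\dfrac{\lambda+\beta}{1-\beta}<1$ is equivalent to $\lambda+2\beta<1$, which does \emph{not} follow from $\lambda+\beta<1$ (take $\lambda=0.1$, $\beta=0.5$: then $\lambda+\beta=0.6<1$ but $\gamma=1.2$). Second, your bound ``$\gamma\leq \beta+\lambda$'' is the wrong way round, since dividing by $1-\beta\leq 1$ can only increase the quantity. So with the inequality as you read it, neither the iteration $d(x_n,x_{n+1})\leq \gamma^n d(x_0,x_1)\to 0$ nor the Cauchy estimate via the ratio test is justified. Once you switch to the product form, your Steps~2 and~3 go through essentially as in the paper; note that in Step~3 the paper does not even need to isolate a $d(z,Tz)$ term, because with the product the factor $d(x_n,Tx_n)\to 0$ already kills the rational part.
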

\begin{proof}
Let $ x_0\in X $ and define the sequence $ {x_n} $ as follows 
$ x_1=T{x_0},\  x_2=T{x_1},...,\ x_n=Tx_{n-1}=T^n{x_0},...$ 
Now we prove that 
\begin{equation*}
\lim_{n\rightarrow \infty}d(x_{n},x_{n+1})=0
\end{equation*}
and
\begin{equation*} 
\lim_{n\rightarrow \infty}d(x_{n},x_{n+2})=0.
\end{equation*}
Using the contractive property we have
\begin{align*}
d(x_{n},x_{n+1})&=d(Tx_{n-1},Tx_{n})\\
&\leq \lambda d(x_{n-1},x_{n})+\beta \frac{d(x_{n-1},x_{n})d(x_{n},x_{n+1})}{1+d(x_{n-1},x_{n})}\\
&\leq \lambda d(x_{n-1},x_{n})+\beta d(x_{n},x_{n+1}),
\end{align*}
which implies
\begin{equation*} 
d(x_{n},x_{n+1})\leq \frac{\lambda}{1-\beta } d(x_{n-1},x_{n})=\gamma d(x_{n-1},x_{n}),
\end{equation*}
where $ \gamma= \frac{\lambda}{1-\beta }$, then $  \gamma \in\left]0,1 \right[ $. Thus , we have 
\begin{align*}
    d(x_{n},x_{n+1})      &  \leq  \gamma d(x_{n-1},x_{n})   \\
                          & \leq  \gamma^2 d(x_{n-2},x_{n-1})\\
                          & \leq...\\
                          & \leq  \gamma^{n} d(x_{0},x_{1}).\\
\end{align*}
Therefore,
\begin{equation} 
\lim_{n\rightarrow \infty}d(x_{n},x_{n+1})=0.
\end{equation}
Appliyin $ (2.26) $ with $ x=x_{n-1} $ and $ y= x_{n+1}$, we obtain
\begin{align*}
d(x_{n},x_{n+2})&=d(Tx_{n-1},Tx_{n+1})\\
&\leq \lambda d(x_{n-1},x_{n+1})+\beta \frac{d(x_{n-1},x_{n})d(x_{n+1},x_{n+2})}{1+d(x_{n-1},x_{n})}\\
&\leq \lambda d(x_{n-1},x_{n+1})+\beta d(x_{n},x_{n+1})\\
&\leq \beta d(x_{n},x_{n+1})+\lambda \left[ \alpha(x_{n-1},x_{n+2})d(x_{n-1},x_{n+2})+\alpha(x_{n+2},x_{n})d(x_{n+2},x_{n})\alpha(x_{n},x_{n+1})d(x_{n},x_{n+1})\right]\\
& \leq \beta d(x_{n},x_{n+1})+\lambda \left[\alpha(x_{n+2},x_{n})d(x_{n+2},x_{n})+\alpha(x_{n},x_{n+1})d(x_{n},x_{n+1})\right]\\
&+\lambda\alpha(x_{n-1},x_{n+2})\left[\alpha(x_{n-1},x_{n})d(x_{n-1},x_{n})+\alpha(x_{n},x_{n+1})d(x_{n},x_{n+1})\alpha(x_{n+1},x_{n+2})d(x_{n+1},x_{n+2})\right],
\end{align*}
which implies
\begin{equation} 
d(x_{n},x_{n+2})\leq \frac{d(x_{n-1},x_{n})}{1-\lambda \alpha(x_{n+2},x_{n})}\left[\beta+\lambda \alpha(x_{n},x_{n+1})+\alpha(x_{n-1},x_{n+2})\left(\alpha(x_{n-1},x_{n})+\alpha(x_{n},x_{n+1})+\alpha(x_{n+1},x_{n+2}) \right)  \right] .
\end{equation}
Thus, by using the fact that $ d(x_{n},x_{n+1})\rightarrow 0$ as $ n\rightarrow \infty$ and using $ (2.28) $, we deduce that
\begin{equation} 
\lim_{n\rightarrow \infty}d(x_{n},x_{n+2})=0.
\end{equation}
Now, similarly to prove of Theorem $ (2.7) $, we deduce that the sequence $ \lbrace x_n\rbrace $ is a Cauchy sequence in $ X $. Since $ (X,d) $ is a complete controlled rectangular metric space, we conclude that $ x_n $ converges to some $ z $ in $ X. $

We shall show that $ z $ is a fixed point of $ T. $
No, we show that $ d(Tz,z)=0 $. From  the controlled rectangular inequality we get, 
\begin{equation}
d\left( z,Tz\right) \leq \alpha(z,x_{n})d\left(z,x_{n}\right) +\alpha(x_{n},Tx_{n})d\left(
x_{n},Tx_{n}\right) +\alpha(Tx_{n},Tz)d\left( Tx_{n},Tz\right). 
\end{equation}
From assumption of the hypothesis, we have
\begin{align}
d\left( z,Tz\right)& \leq \alpha(z,x_{n})d\left(z,x_{n}\right) +\alpha(x_{n},Tx_{n})d\left(x_{n},Tx_{n}\right)
 +\alpha(Tx_{n},Tz)\left[ \lambda d\left( x_{n},z\right)+ \beta \frac{d\left( x_{n},Tx_{n}\right)d\left( z,Tz\right)}{1+d\left( x_{n},z\right)}\right].
\end{align}
By letting $n\rightarrow \infty $ in  $ \left( 2.33\right) $, we obtain
\begin{equation*}
d\left( z,Tz\right) \leq 0.
\end{equation*}
 Which is a contradiction. Thus, $z= Tz $.
 
Uniqueness: assume there exist two fixed points of $ T $ say $ z $ and $ u $ such that $ z\neq u $.
By the contractive property of $ T $ we have 
\begin{equation*}
d(z,u)=d(Tz,Tu)\leq \lambda  d(z,u)+\beta \frac{ d(z,Tz) d(u,Tu)}{1+ d(z,u)}= \lambda  d(z,u)< d(z,u).
\end{equation*}
Hence $ z=u $.
\end{proof}
\begin{exemple}
	Let $ X=A\cup B $, where $ A=\lbrace \frac{1}{n}:n\in\lbrace 2,3,4,5\rbrace \rbrace $ and $ B=\left[1,2 \right]  $. Define $ d:X\times X\rightarrow \left[0,+\infty \right[  $ as follows:
	\begin{equation*}
	\left\lbrace
	\begin{aligned}
	d(x, y) &=d(y, x)\ for \ all \  x,y\in X;\\
	d(x, y) &=0\Leftrightarrow y= x.\\	
	\end{aligned}
	\right.
	\end{equation*}
	and
	\begin{equation*}
	\left\lbrace
	\begin{aligned}		    
	d\left( \frac{1}{3},\frac{1}{4}\right) =d\left( \frac{1}{4},\frac{1}{5}\right) &=0,04\\
	d\left( \frac{1}{3},\frac{1}{5}\right) =d\left( \frac{1}{4},\frac{1}{6}\right) &=0,09\\
	d\left( \frac{1}{3},\frac{1}{6}\right) =d\left( \frac{1}{5},\frac{1}{6}\right)&=0,36\\
	d\left( x,y\right) =\left( \vert x-y\vert\right) ^{2} \ otherwise.
	\end{aligned}
	\right.
	\end{equation*}
	Define mapping $\alpha :X\rightarrow X$ by
	\begin{equation*}
	\alpha(x,y)=\left\lbrace
	\begin{aligned}
	\max\lbrace x,y\rbrace +2	& \ if \ x,y \in \left[1,2 \right]\\
	 3 &   \ otherwise.\\
	\end{aligned}
	\right.
	\end{equation*}
	Then $ (X,d) $ is a is controlled rectangular metric space. However we have the following:
	\begin{itemize}
	\item[1)] $ (X,d) $ is not a  metric space, as $$d\left( \frac{1}{3},\frac{1}{6}\right)=0.36>0.13=d\left( \frac{1}{3},\frac{1}{4}\right)+d\left( \frac{1}{4},\frac{1}{6}\right) 	.$$
	\item[2)] $ (X,d) $ is not a  controlled metric space, as $$d\left( \frac{1}{5},\frac{1}{6}\right)=0.36>0.3033= \alpha\left( \frac{1}{5},\frac{1}{4}\right) d\left( \frac{1}{5},\frac{1}{4}\right)+\alpha\left( \frac{1}{4},\frac{1}{6}\right) d\left( \frac{1}{4},\frac{1}{6}\right) 	.$$ 
	\item[3)] $ (X,d) $ is not a rectangular metric space, as $$d\left( \frac{1}{5},\frac{1}{6}\right)=0.36>0.22=d\left( \frac{1}{5},\frac{1}{3}\right)+d\left( \frac{1}{3},\frac{1}{4}\right)+d\left( \frac{1}{4},\frac{1}{6}\right).$$ 
\end{itemize}
	Define mapping $T:X\rightarrow X$ by
	\begin{equation*}
	T(x)=\left\lbrace
	\begin{aligned}
	x^{\frac{1}{2}}	& \ if \ x\in \left[1,2 \right]\\
	1 &  \ if \ x\in A.\\
	\end{aligned}
	\right.
	\end{equation*}
	Then, $T(x)\in \left[1,2 \right]$. Let $k=\frac{1}{2}$.
	
	Consider the following possibilities:
	
	case 1: $ x,y \in \left[1,2 \right]$ with $ x\neq y $, assume that $ x>y $.
	$$d(Tx,Ty)=\left[  x^{\frac{1}{2}}-y^{\frac{1}{2}}\right]^{2}  .$$
	and
	$$  k.d(x,y)) = \frac{1}{2}\left[x-y\right]^{2} .$$
	On the other hand 
	\begin{align*}
	 d(Tx,Ty)-  k.d(x,y))& =\left[  x^{\frac{1}{2}}-y^{\frac{1}{2}}\right]^{2}-\frac{1}{2}\left[x-y\right]^{2}\\
	&=\frac{1}{2}\left( x^{\frac{1}{2}}-y^{\frac{1}{2}} \right) \left(\sqrt{2}x^{\frac{1}{2}}-\sqrt{2}y^{\frac{1}{2}}+x-y\right)\left(\sqrt{2}-x^{\frac{1}{2}}-y^{\frac{1}{2}}\right).  
	\end{align*}
	Since $ x,y\in \left[1,2 \right]  $, then
	$$\left(\sqrt{2}-x^{\frac{1}{2}}-y^{\frac{1}{2}}\right)\leq 0  .$$
	Which implies that
	\begin{align*}
	 d(Tx,Ty) &\leq  k.d(x,y)).
	\end{align*}
	case 2: $ x\in \left[1,2 \right], y \in A  $ or $ y\in \left[1,2 \right],x \in A  $ . 
	
Therefore, $ T(x)=x^{\frac{1}{2}} $, $ T(y)=1 $, then $ d(Tx,Ty)=\left( \vert x^{\frac{1}{2}}-1 \vert \right) ^{2}=\left(  x^{\frac{1}{2}}-1  \right) ^{2}. $

Since, $ x\geq y $ for all $ x\in \left[1,2 \right], y \in A  $. Therefore,
$k.d(x,y)=\frac{1}{2}\left( x-y \right)^{2}.$

On the other hand
\begin{align*}
 0\leq \left( x^{\frac{1}{2}}-1  \right) ^{2} & \leq \left( 2^{\frac{1}{2}}-1 \right) ^{2}\\
 & \leq	\frac{2}{9}\\
 &=\frac{1}{2}\left(1-\frac{1}{3} \right)^{2}\\
 &\leq \frac{1}{2}\left(x-\frac{1}{3} \right)^{2}\\
 & \leq \frac{1}{2}\left(x-y \right)^{2}.   
\end{align*}
Which implies that
	\begin{align*}
	 d(Tx,Ty) &\leq   k.d(y,Ty)).
	\end{align*}
case 3: $ x,y \in A$ 
$$ d(Tx,Ty)=0. $$
Which implies that
	\begin{align*}
	 d(Tx,Ty) &\leq   k.d(y,Ty)).
	\end{align*}
Note that for each $x\in X  $,
\begin{equation*}
	T^{n}(x)=\left\lbrace
	\begin{aligned}
	x^{\frac{1}{2^{n}}}	& \ if \ x\in \left[1,2 \right]\\
	1 &  \ if \ x\in A.\\
	\end{aligned}
	\right.
	\end{equation*}
Thus we obtain:
\begin{equation*}
\lim_{i\rightarrow \infty }\sup_{m\geq 1}\alpha\left( x_{i}, x_{m}\right) \frac{\alpha\left( x_{i+1}, x_{i+2}\right)+\alpha\left( x_{i+2}, x_{i+3}\right)}{\alpha\left( x_{i}, x_{i+1}\right)+\alpha\left( x_{i+1}, x_{i+2}\right)}=3<4=\frac{1}{k^2}.
\end{equation*}
On the other hand
\begin{equation*}
\lim\limits_{n \rightarrow +\infty}\alpha(x_{n},x)=  \lim\limits_{n \rightarrow +\infty}\alpha(x,x_{n})\leq 4 \ and  \  \lim\limits_{n,m \rightarrow +\infty}\alpha(x_{n},x_{m})=3 \  \forall n,m\in\mathbb{N},\ n\neq m.
\end{equation*}
Therefore, all conditions of Theorem $(3.7)  $ are satisfied hence $ T $ has a unique fixed point $ Z=1. $
	\end{exemple} 
\bibliographystyle{amsplain}

\end{document}